\newcommand{\R}{\mathbb{R}}
\newtheorem{theorem}{Theorem}[section]
\newtheorem{proposition}[theorem]{Proposition}
\newtheorem{corollary}[theorem]{Corollary}
\newtheorem{lemma}[theorem]{Lemma}
\theoremstyle{definition}
\newtheorem{definition}[theorem]{Definition}
\newtheorem{remark}[theorem]{Remark}
\numberwithin{equation}{section}
\numberwithin{theorem}{section}
\numberwithin{equation}{section}
\begin{document}
\title[Generalization of the Wiener-Ikehara theorem]{Generalization of the Wiener-Ikehara theorem}

\author[G. Debruyne]{Gregory Debruyne}
\thanks{G. Debruyne gratefully acknowledges support by Ghent University, through a BOF Ph.D. grant.}
\address{G. Debruyne\\ Department of Mathematics\\ Ghent University\\ Krijgslaan 281\\ B 9000 Ghent\\ Belgium}
\email{gregory.debruyne@UGent.be}
\author[J. Vindas]{Jasson Vindas}
\thanks{The work of J. Vindas was supported by the Research Foundation--Flanders, through the FWO-grant number 1520515N}
\address{J. Vindas\\ Department of Mathematics\\ Ghent University\\ Krijgslaan 281\\ B 9000 Ghent\\ Belgium}
\email{jasson.vindas@UGent.be}
\subjclass[2010]{11M45, 40E05.}
\keywords{Exact Wiener-Ikehara theorem; log-linearly slowly decreasing functions; pseudofunctions; pseudomeasures; Laplace transform}

\begin{abstract} We study the Wiener-Ikehara theorem under the so-called log-linearly slowly decreasing condition. Moreover, we clarify the connection between two different hypotheses on the Laplace transform occurring in  exact forms of the Wiener-Ikehara theorem, that is, in ``if and only if'' versions of this theorem.
\end{abstract}

\maketitle

\section{Introduction}

The Wiener-Ikehara theorem plays a central role in Tauberian theory \cite{korevaarbook}. Since its publication \cite{ikehara,Wienerbook}, there have been numerous applications and generalizations of this theorem, see, e.g., \cite{Aramaki1996,Debruyne-VindasComplexTauberians,delange1954,grahamvaaler,korevaar2005,revesz-roton, zhang2014}. 

Recently, Zhang has relaxed the non-decreasing Tauberian condition in the Wiener-Ikehara theorem to so-called log-linear slow decrease. Following Zhang, we shall call a function $f$ linearly slowly decreasing if for each $\varepsilon>0$ there is $a>1$ such that 
$$
\liminf_{x\to\infty} \inf_{y\in [x,ax]} \frac{f(y)-f(x)}{x}\geq -\varepsilon,
$$
and we call a function $S$ \emph{log-linearly slowly decreasing} if $S(\log x) $ is linearly slowly decreasing, i.e., if for each $\varepsilon > 0$ there exist $\delta > 0$ and $x_{0}$ such that
\begin{equation}
\label{defloglsd}
 \frac{S(x+h) - S(x)}{e^{x}} \geq -\varepsilon,  \quad \mbox{for } 0 \leq h \leq \delta \mbox{ and } x\geq x_{0}.
\end{equation}
Using the latter condition, Zhang was able to obtain an exact form of the Wiener-Ikehara theorem. His theorem\footnote{W.-B. Zhang communicated Theorem \ref{thzhang} in his talk \emph{Exact Wiener-Ikehara theorems}, presented at the Number Theory Seminar of the University of Illinois at Urbana-Champaign on July 5, 2016.} reads as follows,

\begin{theorem} \label{thzhang} Let $S\in L^{1}_{loc}[0,\infty)$ be log-linearly slowly decreasing. Assume that
\begin{equation} \label{conditionzhang1}
 \mathcal{L}\{S;s\} = \int^{\infty}_{0} e^{-sx}S(x) \mathrm{d}x  \ \ \ \text{is absolutely convergent for } \Re e \: s > 1 
\end{equation}
and that there is a constant $a$ for which 
$$
G(s)= \mathcal{L}\{S;s\} - \frac{a}{s-1}
$$
satisfies: There is $\lambda_{0}>0$ such that for each $\lambda\geq \lambda_0$
 \begin{equation}
 \label{conditionzhang2}
 I_{\lambda}(h)=\lim_{\sigma \rightarrow 1^{+}} \int^{\lambda}_{-\lambda} G(\sigma + it) e^{iht} \left(1 - \frac{\left|t\right|}{\lambda}\right)\mathrm{d}t 
 \end{equation}
exists for all sufficiently large $h>h_{\lambda}$ and 
\begin{equation} \label{conditionzhang3}
 \lim_{h \rightarrow \infty} I_{\lambda}(h)= 0.
 \end{equation}
Then,
\begin{equation} \label{conclusionzhang}
 S(x) \sim ae^{x}.
\end{equation}
\end{theorem}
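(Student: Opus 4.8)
The plan is to recast the hypotheses as a statement about Fejér means and then to run a two-sided Tauberian sandwich driven by the log-linear slow decrease. First I would absorb the pole: since $\tfrac{a}{s-1}=\int_0^\infty e^{-sx}ae^{x}\,\mathrm{d}x$ for $\Re e\, s>1$, one has $G(s)=\int_0^\infty e^{-(s-1)x}v(x)\,\mathrm{d}x$ with $v(x):=e^{-x}S(x)-a$, so that \eqref{conclusionzhang} is equivalent to $v(x)\to0$. Writing the Fejér kernel
\[ F_\lambda(t)=\frac{1}{2\pi}\int_{-\lambda}^{\lambda}\Bigl(1-\frac{|u|}{\lambda}\Bigr)e^{iut}\,\mathrm{d}u=\frac{\lambda}{2\pi}\Bigl(\frac{\sin(\lambda t/2)}{\lambda t/2}\Bigr)^{2}\ge 0,\qquad \int_{-\infty}^{\infty}F_\lambda=1, \]
and putting $s=\sigma+it$ with $\sigma=1+\sigma'>1$, Fubini's theorem (legitimate by \eqref{conditionzhang1}) gives
\[ \int_{-\lambda}^{\lambda}G(\sigma+it)\Bigl(1-\frac{|t|}{\lambda}\Bigr)e^{iht}\,\mathrm{d}t=2\pi\int_0^\infty e^{-\sigma' y}\,v(y)\,F_\lambda(h-y)\,\mathrm{d}y. \]
Letting $\sigma'\to0^+$, conditions \eqref{conditionzhang2}--\eqref{conditionzhang3} become $\lim_{h\to\infty}(v*F_\lambda)(h)=0$ for every $\lambda\ge\lambda_0$; that is, the Fejér means of $v$ vanish at infinity.

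Next I would translate \eqref{defloglsd}. From $\tfrac{S(x+\eta)-S(x)}{e^{x}}=e^{\eta}\bigl(v(x+\eta)+a\bigr)-\bigl(v(x)+a\bigr)$ one sees that, once $v$ is bounded, \eqref{defloglsd} yields the genuine one-sided estimate: for each $\varepsilon>0$ there are $\delta,x_0$ with $v(x+\eta)\ge v(x)-\varepsilon$ for $0\le\eta\le\delta$ and $x\ge x_0$, the factor $e^{\eta}$ being harmless for bounded $v$ and small $\eta$. In particular $v(y)\ge v(x)-\varepsilon$ on $[x,x+\delta]$ whenever $v(x)$ is near the upper limit, and symmetrically $v(y)\le v(x)+\varepsilon$ on $[x-\delta,x]$ when $v(x)$ is near the lower limit.

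With $v$ bounded, the sandwich is then routine. To prove $\limsup_{x\to\infty}v(x)\le0$, choose $x_n\to\infty$ with $v(x_n)\to L:=\limsup v$, so $v\ge L-\varepsilon$ on $[x_n,x_n+\delta]$; centring at $h_n=x_n+\delta/2$ and splitting the convolution,
\[ (v*F_\lambda)(h_n)\ge (L-\varepsilon)\!\int_{|u|\le\delta/2}\!F_\lambda(u)\,\mathrm{d}u-\|v\|_\infty\!\int_{|u|>\delta/2}\!F_\lambda(u)\,\mathrm{d}u. \]
Using the tail bound $\int_{|u|>R}F_\lambda(u)\,\mathrm{d}u=O\!\bigl(1/(\lambda R)\bigr)$ and letting first $n\to\infty$ (the left side tends to $0$), then $\lambda\to\infty$, then $\varepsilon\to0$, forces $L\le0$; the bound $\liminf v\ge0$ follows identically, now centring the kernel on the interval to the left. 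This gives $v\to0$, i.e.\ \eqref{conclusionzhang}. The tail estimate is precisely why \eqref{conditionzhang2}--\eqref{conditionzhang3} are demanded for all $\lambda\ge\lambda_0$: taking $\lambda$ large makes both the central concentration within the slow-decrease scale $\delta$ and the smallness of the tails available at once.

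The main obstacle is boundedness of $v$ together with the boundary passage $\sigma'\to0^+$. Before a bound on $v$ is available, the boundary convolution $\int_0^\infty v(y)F_\lambda(h-y)\,\mathrm{d}y$ need not converge absolutely -- the Fejér tails decay only like $y^{-2}$ while $v$ is a priori merely $L^1_{loc}$ -- so $I_\lambda(h)$ cannot be identified with $(v*F_\lambda)(h)$ by naive dominated convergence, and the limsup/liminf arguments above are circular, each needing a two-sided bound on $v$ to control the tails. I expect to break the circularity by working throughout with the regularized means $\int_0^\infty e^{-\sigma'y}v(y)F_\lambda(h-y)\,\mathrm{d}y$, extracting an a priori bound on $v$ from the one-sided estimate -- which, integrated over $[x,x+\delta]$, controls $v$ by its local averages, $v(x)\le\tfrac1\delta\int_x^{x+\delta}v+\varepsilon$ and $v(x)\ge\tfrac1\delta\int_{x-\delta}^{x}v-\varepsilon$ -- and only then removing the regularization. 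Keeping the negative part of $v$ in check through these one-sided bounds is exactly what the log-linear condition buys, and the bookkeeping around the $e^{\eta}$ factors and the order of the three limits is the remaining technical burden.
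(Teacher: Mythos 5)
Your route is genuinely different from the paper's. The paper proves Theorem \ref{thzhang} in one line, as the special case $\varphi_{\lambda}(t)=\chi_{[-\lambda,\lambda]}(t)(1-|t|/\lambda)$ of its general Theorem \ref{thol1}, which in turn rests on distribution-theoretic machinery: the boundedness result of Proposition \ref{thboundedness}, Wiener's division lemma (Lemma \ref{lemwienerdivision}) to upgrade information for one kernel to all test functions, and the local-pseudofunction Theorem \ref{thlocalpseudofunction}, whose proof invokes Banach--Steinhaus and (via Proposition \ref{W-Iprop1}) the edge-of-the-wedge theorem. You instead give a direct, classical Fej\'er-kernel argument in the spirit of Bochner's proof of Wiener--Ikehara. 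Your Parseval/Fubini identity, the translation of \eqref{conditionzhang2}--\eqref{conditionzhang3} into $(v\ast F_{\lambda})(h)\to 0$, the reformulation of \eqref{defloglsd} for bounded $v$, and the two-sided sandwich (including the order of limits $h\to\infty$, then $\lambda\to\infty$, then $\varepsilon\to 0^{+}$, with the tail bound $\int_{|u|>R}F_{\lambda}=O(1/(\lambda R))$) are all correct. What you gain is elementarity and self-containedness; what you give up is the paper's extra generality (arbitrary kernels in $A_{c}(\mathbb{R})$, the ``if and only if'' formulations, local pseudofunction boundary behavior).

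The one genuine gap is the boundedness of $v$, which you flag but do not close. Your proposed mechanism---control $v(x)$ by its local averages via the one-sided condition, then bound those averages by the regularized Fej\'er means---still runs into the circularity you yourself identify: to isolate $\int_{x}^{x+\delta}v$ inside $\int_{0}^{\infty}e^{-\sigma' y}v(y)F_{\lambda}(h-y)\,\mathrm{d}y$ you must control the contribution of $v$ from outside $[x,x+\delta]$, which again requires a bound on $v$. The missing ingredient (the content of the paper's Proposition \ref{thboundedness}) is a \emph{one-sided} a priori bound obtained before any Fourier analysis: iterating \eqref{defloglsd} with a fixed $\varepsilon$ gives $S(u)-S(y)\geq -Ce^{u}$ for all $u\geq y\geq 0$, hence $v\geq -C'$, and one may even replace $S$ by $S(u)+S(0)+Ce^{u}$ to assume $S\geq 0$. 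With $v+C'\geq 0$ and $F_{\lambda}\geq 0$, the passage $\sigma'\to 0^{+}$ is justified by \emph{monotone} rather than dominated convergence; the limit $\int_{0}^{\infty}(v(y)+C')F_{\lambda}(h-y)\,\mathrm{d}y$ is then finite and $O(1)$ in $h$ by \eqref{conditionzhang3}, and, the integrand being nonnegative, it dominates the piece over $[h-\delta/2,h+\delta/2]$, which bounds the local averages and hence $v$ from above via your one-sided inequality. Once you insert this positivity-plus-monotone-convergence step, your argument is complete.
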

Theorem \ref{thzhang} is exact in the sense that if (\ref{conclusionzhang}) holds, then $S$ is log-linearly slowly decreasing and (\ref{conditionzhang1})--(\ref{conditionzhang3}) hold as well. Note that the hypotheses (\ref{conditionzhang2}) and (\ref{conditionzhang3}) in Zhang's result cover as particular instances the cases when $\mathcal{L}\{S;s\} - a/(s-1)$ has analytic or even $L^{1}_{loc}$-extension to $\Re e\: s=1$, as follows from the Riemann-Lebesgue lemma.

About a decade ago, Korevaar \cite{korevaar2005} also obtained an exact form of the Wiener-Ikehara theorem for non-decreasing functions. His exact hypothesis on the Laplace transform was the so-called local pseudofunction boundary behavior. The authors have recently established \cite{Debruyne-VindasComplexTauberians} local pseudofunction behavior as a minimal boundary assumption in other complex Tauberian theorems for Laplace transforms.  It should be pointed out that Tauberian theorems with mild boundary hypotheses have relevant applications in the theory of Beurling generalized numbers (cf.~\cite{d-vPNTequiv2016,diamond-zhang2013,diamond-zhangbook,s-v,zhang2014}); in fact, in that setting one must  work with zeta functions whose boundary values typically display very low regularity properties.

In this article we show that local pseudofunction boundary behavior is also able to deliver an exact form of the Wiener-Ikehara theorem if one works with log-linear slow decrease. Moreover, we clarify the connection between local pseudofunction boundary behavior and the exact conditions of Zhang, giving a form of the Wiener-Ikehara theorem that contains both versions (Theorem \ref{thol1}). 

We thank H. G. Diamond and W.-B. Zhang  for useful discussions on the subject.

\section{Pseudofunctions and pseudomeasures}
\label{section preli}
We present in this section some background material on pseudofunctions and pseudomeasures. 

We begin with Fourier transforms, which we shall interpret in the distributional sense. The standard Schwartz test function spaces of compactly supported smooth functions (on an open subset $U\subseteq \mathbb{R}$)  and rapidly decreasing functions are denoted by $\mathcal{D}(U)$ and $\mathcal{S}(\mathbb{R})$, while $\mathcal{D}'(U)$ and $\mathcal{S}'(\mathbb{R})$ stand for their topological duals, the spaces of distributions and tempered distributions. The Fourier transform, normalized as $\hat{\varphi}(t)=\mathcal{F}\{\varphi;t\}=\int_{-\infty}^{\infty}e^{-itx}\varphi(x)\:\mathrm{d}x,$ is a topological automorphism on the Schwartz space $\mathcal{S}(\mathbb{R})$. One can then extend it to $\mathcal{S}'(\mathbb{R})$ via duality, namely, the Fourier transform of $f\in\mathcal{S}'(\mathbb{R})$ is the tempered distribution $\hat{f}\in \mathcal{S}'(\mathbb{R})$ determined by $\langle \hat{f}(t),\varphi(t)\rangle=\langle f(x),\hat{\varphi}(x)\rangle$, for each test function $\varphi\in\mathcal{S}(\mathbb{R})$. As usual, locally integrable functions are regarded as distributions via $\langle f(x),\varphi(x)\rangle=\int_{-\infty}^{\infty}f(x)\varphi(x)\mathrm{d}x$. Note that if $f\in\mathcal{S}'(\mathbb{R})$ has support in $[0,\infty)$, its Laplace transform 
$\mathcal{L}\left\{f;s\right\}=\left\langle f(u),e^{-su}\right\rangle$ is well-defined, analytic on $\Re e\:s>0$, and one has $\lim_{\sigma\to0^{+}}\mathcal{L}\left\{f;\sigma+it\right\}=\hat{f}(t)$, in the distributional sense.
See the textbooks \cite{bremermann,vladimirov} for further details on distribution theory.

Pseudofunctions and pseudomeasures are special kinds of  Schwartz distributions that arise in harmonic analysis \cite{benedettobook,kahane-salem} and are defined via Fourier transform. A tempered distribution $f\in\mathcal{S}'(\mathbb{R})$ is called a (global) pseudomeasure if $\hat{f} \in L^{\infty}(\mathbb{R})$. If we additionally have $\lim_{|x|\to\infty}\hat{f}(x)=0$, we call $f$ a (global) pseudofunction. We denote the spaces of pseudofunctions and pseudomeasures by $PF(\mathbb{R})$ and $PM(\mathbb{R})$, respectively. 

We say that a distribution $g$ is a  pseudofunction (pseudomeasure) at $t_0\in\mathbb{R}$ if the point possesses an open neighborhood where $g$ coincides with a pseudofunction (pseudomeasure). We then say that $g\in\mathcal{D}'(U)$ is a \emph{local} pseudofunction (local pseudomeasure) on an open set $U\subseteq\mathbb{R}$ if $g$ is a pseudofunction (pseudomeasure) at every $t_0\in U$; we write $g\in PF_{loc}(U)$ ($g\in PM_{loc}(U)$). Using a partition of the unity, one easily checks that  $g\in PF_{loc}(U)$ if and only if $\varphi g \in PF(\mathbb{R})$ for each $\varphi\in \mathcal{D}(U)$, or, which amounts to the same, it satisfies \cite{korevaar2005}
\begin{equation}
\label{eqRL}
\left\langle g(t),e^{iht}\varphi(t)\right\rangle=o(1),
\end{equation}
as $|h|\to\infty,$ 
for each $\varphi\in \mathcal{D}(U)$. The property (\ref{eqRL}) can be regarded as a generalized Riemann-Lebesgue lemma. In particular, $L^{1}_{loc}(U)\subset PF_{loc}(U)$. Likewise, if we replace $o(1)$ by $O(1)$ in (\ref{eqRL}), namely,
\begin{equation}
\label{eqRL2}
\left\langle g(t),e^{iht}\varphi(t)\right\rangle=O(1),
\end{equation}
as $|h|\to\infty$, we obtain a characterization of local pseudomeasures. Hence, any Radon measure on $U$ is an instance of a local pseudomeasure.  We mention that smooth functions are multipliers for local pseudofunctions and pseudomeasures, as follows from \eqref{eqRL} and \eqref{eqRL2}.

Let $G(s)$ be analytic on the half-plane $\Re e\:s>\alpha$. We say that $G$ has local pseudofunction (local pseudomeasure) boundary behavior on the boundary open set $\alpha+iU$ if there is $g\in PF_{loc}(U)$ ($g\in PM_{loc}(U)$) such that
\begin{equation}
\label{bveq}
\lim_{\sigma\to\alpha^{+}}\int_{-\infty}^{\infty}G(\sigma+it)\varphi(t)\mathrm{d}t=\left\langle g(t),\varphi(t)\right\rangle\ , \quad \mbox{for each } \varphi\in\mathcal{D}(U).
\end{equation}
The meaning of having pseudofunction (pseudomeasure) boundary behavior at a boundary point $\alpha+it_{0}$ should be clear. We emphasize that $L^1_{loc}$, continuous, or analytic extension are very special cases of local pseudofunction boundary behavior. Interestingly, if $g\in\mathcal{D}'(U)$ is the distributional boundary value of an analytic function, just having 
\eqref{eqRL}  (\eqref{eqRL2}, resp.) as $h\to\infty$ suffices to conclude that $g\in PF_{loc}(U)$ ($g\in PM_{loc}(U)$), as shown by the following proposition.

\begin{proposition} \label{W-Iprop1} Suppose that $g\in\mathcal{D}'(U)$ is the boundary distribution on $\alpha+iU$ of an analytic function $G$ on the half-plane $\Re e\:s>\alpha$, that is, that \eqref{bveq} holds for every test function $\varphi\in\mathcal{D}(U)$. Then, for each $\varphi\in\mathcal{D}(U)$ and $n\in\mathbb{N}$,
\[
\left\langle g(t),e^{iht}\varphi(t)\right\rangle=O\left(\frac{1}{|h|^{n}}\right), \quad h\to-\infty.
\]
In particular, $g$ is a local pseudofunction (local pseudomeasure) on $U$ if and only if \eqref{eqRL} (\eqref{eqRL2}, resp.) holds as $h\to\infty$ for each $\varphi\in\mathcal{D}(U)$.
\end{proposition}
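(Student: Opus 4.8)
The plan is to estimate $A_\varphi(\sigma):=\int_{-\infty}^\infty G(\sigma+it)\,e^{iht}\varphi(t)\,\mathrm{d}t$ for $\alpha<\sigma\le\beta$ (with $\beta>\alpha$ fixed and $K=\operatorname{supp}\varphi$ compact), and to recover $\langle g(t),e^{iht}\varphi(t)\rangle=\lim_{\sigma\to\alpha^+}A_\varphi(\sigma)$ from \eqref{bveq} applied to the test function $e^{ih\cdot}\varphi\in\mathcal D(U)$. The essential idea is to ``push the contour to the right'' analytically: since $G$ is holomorphic, the Cauchy--Riemann equations give $\partial_\sigma G(\sigma+it)=\tfrac1i\partial_t G(\sigma+it)$, and after one integration by parts in $t$ (the boundary terms vanish because $\varphi$ is compactly supported in $U$) I obtain the differential relation $A_\varphi'(\sigma)=-h\,A_\varphi(\sigma)+i\,A_{\varphi'}(\sigma)$ on $(\alpha,\beta]$. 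Solving this first-order linear ODE with integrating factor $e^{h\sigma}$ and letting $\sigma\to\alpha^+$ yields the exact identity $\langle g,e^{ih\cdot}\varphi\rangle=e^{h(\beta-\alpha)}A_\varphi(\beta)-i\int_\alpha^\beta e^{h(\sigma-\alpha)}A_{\varphi'}(\sigma)\,\mathrm{d}\sigma$. For $h\to-\infty$ the prefactor $e^{h(\beta-\alpha)}$ is exponentially small, while the weight $e^{h(\sigma-\alpha)}=e^{-|h|(\sigma-\alpha)}$ concentrates the remaining integral near $\sigma=\alpha$ and contributes, after integration, a factor of order $|h|^{-1}$. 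Iterating the same identity $m$ times replaces $\varphi$ successively by $\varphi',\varphi'',\dots,\varphi^{(m)}$ and produces a remainder carrying the weight $(\sigma-\alpha)^{m-1}e^{-|h|(\sigma-\alpha)}/(m-1)!$, whose $\sigma$-integral is at most $\int_0^\infty u^{m-1}e^{-|h|u}\,\mathrm{d}u/(m-1)!=|h|^{-m}$, plus boundary terms each of size $O(e^{-|h|(\beta-\alpha)})$.

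The step I expect to be the main obstacle is controlling $A_{\varphi^{(k)}}(\sigma)$ uniformly as $\sigma\to\alpha^+$, since $G$ may blow up along the boundary and a crude pointwise bound $|G(\sigma+it)|\le C(\sigma-\alpha)^{-N}$ would make the remainder integral diverge. The device that resolves this is uniform boundedness: for fixed $\beta>\alpha$ the functionals $\psi\mapsto\int_K G(\sigma+it)\psi(t)\,\mathrm{d}t$, $\sigma\in(\alpha,\beta]$, are continuous on $\mathcal D(K)$ and pointwise bounded in $\sigma$ (the limit as $\sigma\to\alpha^+$ exists by \eqref{bveq} and $\sigma\mapsto\langle G(\sigma+i\cdot),\psi\rangle$ is continuous on $(\alpha,\beta]$), so by the Banach--Steinhaus theorem they are equicontinuous; hence there are $C$ and $N$, independent of $\sigma$, with $|\langle G(\sigma+i\cdot),\psi\rangle|\le C\|\psi\|_{C^N}$ for all $\psi\in\mathcal D(K)$. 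Testing against $\psi=e^{ih\cdot}\varphi^{(k)}$ and noting $\|e^{ih\cdot}\varphi^{(k)}\|_{C^N}\le C_k(1+|h|)^N$ gives the crucial bound $|A_{\varphi^{(k)}}(\sigma)|\le C_k(1+|h|)^N$, uniformly in $\sigma\in(\alpha,\beta]$. This bound is uniform up to the boundary --- it trades the blow-up in $\sigma$ for mere polynomial growth in $h$ --- which is exactly what keeps the iterated remainder integrals convergent.

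Combining the two ingredients, after $m$ iterations I would bound the remainder by $C_m(1+|h|)^N|h|^{-m}$ and all boundary terms by $O(e^{-|h|(\beta-\alpha)})$, so that $\langle g,e^{ih\cdot}\varphi\rangle=O(|h|^{N-m})$ as $h\to-\infty$. Given $n\in\mathbb N$, choosing $m=N+n$ yields $\langle g(t),e^{iht}\varphi(t)\rangle=O(|h|^{-n})$, the asserted estimate. The sign of $h$ is used decisively: only for $h\to-\infty$ does $e^{h(\sigma-\alpha)}$ decay for $\sigma>\alpha$, consistent with the one-sided nature of the conclusion. Finally, the ``in particular'' statement is immediate: since the decay as $h\to-\infty$ holds automatically and at every polynomial rate, the pseudofunction condition \eqref{eqRL} (resp.\ the pseudomeasure condition \eqref{eqRL2}) for $|h|\to\infty$ reduces to its validity as $h\to+\infty$, and conversely the latter together with the established rapid decay as $h\to-\infty$ gives $o(1)$ (resp.\ $O(1)$) as $|h|\to\infty$, that is, $g\in PF_{loc}(U)$ (resp.\ $g\in PM_{loc}(U)$).
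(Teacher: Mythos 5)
Your proof is correct, but it takes a genuinely different route from the paper's. The paper localizes $g$ by choosing $f\in\mathcal{S}'(\mathbb{R})$ with $\hat f=g$ near $\operatorname*{supp}\varphi$ and $\hat f$ compactly supported, invokes the Paley--Wiener--Schwartz theorem to get polynomial growth of $f$, splits $f=f_++f_-$ at the origin, and uses the edge-of-the-wedge theorem to show that $\hat f_-$ is real analytic (so contributes rapid decay in both directions), leaving only the elementary estimate $\langle f_+,\hat\varphi(\cdot-h)\rangle=O(|h|^{-n})$ for $h\to-\infty$. You instead work directly with $A_\varphi(\sigma)=\int G(\sigma+it)e^{iht}\varphi(t)\,\mathrm{d}t$: the Cauchy--Riemann identity $\partial_\sigma G=\tfrac1i\partial_t G$ plus one integration by parts gives the exact relation $(e^{h\sigma}A_\varphi(\sigma))'=i e^{h\sigma}A_{\varphi'}(\sigma)$, which you iterate, and the only nontrivial input is the Banach--Steinhaus equicontinuity bound $|\langle G(\sigma+i\cdot),\psi\rangle|\le C\|\psi\|_{C^N}$ uniformly for $\sigma\in(\alpha,\beta]$ (valid because the boundary limits exist pointwise on the Fr\'echet space $\mathcal{D}(K)$); I checked the bookkeeping — the iterated remainder carries the weight $(\sigma-\alpha)^{m-1}e^{-|h|(\sigma-\alpha)}/(m-1)!$ as you claim, the boundary terms at $\sigma=\beta$ are bounded uniformly in $h$, and $m=N+n$ closes the argument. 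Your version is more elementary and self-contained (no Paley--Wiener--Schwartz, no edge-of-the-wedge) and is quantitative about what the $O$-constant depends on, namely the distributional order $N$ of the boundary value near $\operatorname*{supp}\varphi$; the paper's version is softer but yields the structural byproduct that $g$ is locally the sum of a real-analytic function and the boundary value $\hat f_+$ of a Laplace transform of a polynomially bounded function on $[0,\infty)$, which is the decomposition reused in the proof of Proposition \ref{W-Icosistencyprop}. The ``in particular'' statement is handled identically in both treatments.
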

\begin{proof} Fix $\varphi\in \mathcal{D}(U)$ and let $V$ be an open neighborhood of $\operatorname*{supp}\varphi$ with compact closure in $U$.  Pick a distribution $f\in\mathcal{S}'(\mathbb{R})$ such that $\hat{f}$ has compact support and $\hat{f}=g$ on $V$. The Paley-Wiener-Schwartz theorem tells us that $f$ is an entire function with at most polynomial growth on the real axis, so, find $m>0$ such that $f(x)=O (|x|^{m})$, $|x|\to\infty$. Let $f_{\pm}(x)=f(x)H(\pm x)$, where $H$ is the Heaviside function, i.e., the characteristic function of the interval $[0,\infty)$. Observe that \cite{bremermann} $\hat{f}_{\pm}(t)=\lim_{\sigma\to0^{+}} \mathcal{L}\{f_{\pm}; \pm\sigma+it\}$, where the limit is taken in $\mathcal{S}'(\mathbb{R})$. We also have $g=\hat{f}_{-}+\hat{f}_{+}$ on $V$. Consider the analytic function, defined off the imaginary axis,
\[
F(s)=\begin{cases}
G(s+\alpha)- \mathcal{L}\{f_{+}; s\} & \quad \mbox{if }\Re e\: s>0,
\\
\mathcal{L}\{f_{-}; s\} & \quad \mbox{if }\Re e\: s<0.
\end{cases}
\]
The function $F$ has zero distributional jump across the subset $iV$ of the imaginary axis, namely,
\[
\lim_{\sigma\to 0^{+}} F(\sigma +it)- F(-\sigma+it)= 0 \quad \mbox{in }\mathcal{D}'(V).
\]
The edge-of-the-wedge theorem \cite[Thm. B]{rudin1971} gives that $F$ has analytic continuation through $iV$. We then conclude that $\hat{f}_{-}$ must be a real analytic function on $V$. Integration by parts then yields
$$
\left\langle \hat{f}_{-}(t),e^{iht}\varphi(t)\right\rangle= \int_{-\infty}^{\infty} \hat{f}_{-}(t)\varphi(t)e^{iht}\:\mathrm{d}t=O_{n}\left(\frac{1}{|h|^{n}}\right), \quad |h|\to\infty.
$$
On the other hand, as $h\to-\infty$,
\begin{align*}
 \left\langle \hat{f}_{+}(t),e^{iht}\varphi(t)\right\rangle&= \left\langle f_{+}(x),\hat{\varphi}(x-h)\right\rangle=\int_{0}^{\infty} f(x)\hat{\varphi}(x+|h|)\:\mathrm{d}x 
\\
&
\ll_{n,m} \int_{0}^{\infty}\frac{ (x+1)^{m}}{(x+|h|)^{n+m+1}}\:\mathrm{d}x\leq  \frac{1}{|h|^{n}}\int_{0}^{\infty}\frac{ \mathrm{d}u}{(u+1)^{n+1}},
\end{align*}
because $\hat{\varphi}$ is rapidly decreasing.
\end{proof}
 
\section{Generalizations of the Wiener-Ikehara theorem}

We begin our investigation with a boundedness result. We call a function $S$ \emph{log-linearly boundedly decreasing} if there is $\delta>0$ such that
$$
\liminf_{x\to\infty} \inf_{h\in [0,\delta]} \frac{S(x+h)-S(x)}{e^{x}}> -\infty,
$$
that is, if there are $\delta,x_0,M > 0$ such that
\begin{equation}
\label{deflogbsd}
 S(x+h) - S(x) \geq -Me^{x}, \quad \mbox{for } 0 \leq h \leq \delta \mbox{ and } x\geq x_{0}.
\end{equation}
Functions defined on $[0,\infty)$ are always tacitly extended to $(-\infty,0)$ as $0$ for $x<0$.
\begin{proposition} \label{thboundedness} Let $S\in L^{1}_{loc}[0,\infty)$. Then, 
 \begin{equation}
 \label{W-Ieq1}
 S(x) =O(e^{x}), \quad x\to\infty,
\end{equation}
if and only if $S$ is log-linearly boundedly decreasing and its Laplace transform
\begin{equation}\label{LaplaceConvergenceeq}
 \mathcal{L}\{S;s\} = \int^{\infty}_{0} e^{-sx} S(x) \mathrm{d}x  \quad \mbox{converges for } \Re e \: s > 1
\end{equation}
and admits pseudomeasure boundary behavior at the point $s = 1$.

\end{proposition}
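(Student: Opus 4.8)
The plan is to pass to $T(x)=e^{-x}S(x)$, so that \eqref{W-Ieq1} becomes $T=O(1)$, \eqref{LaplaceConvergenceeq} becomes convergence of $\mathcal{L}\{T;w\}=\mathcal{L}\{S;w+1\}$ for $\Re e\: w>0$, the decrease hypothesis \eqref{deflogbsd} turns into the one-sided inequality $e^{h}T(x+h)\ge T(x)-M$ for $0\le h\le\delta$ and $x\ge x_{0}$, and pseudomeasure boundary behavior of $\mathcal{L}\{S;\cdot\}$ at $s=1$ becomes the existence of a local pseudomeasure boundary value $g\in PM_{loc}(U)$ of $\mathcal{L}\{T;\cdot\}$ on some neighborhood $U$ of the origin. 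With this normalization the necessity direction is immediate: if $T\in L^{\infty}$, then \eqref{deflogbsd} and \eqref{LaplaceConvergenceeq} hold trivially, while $\hat T$ is the boundary value of $\mathcal{L}\{T;\cdot\}$ and, for every $\varphi\in\mathcal{D}(\mathbb{R})$, $\langle \hat T(t),e^{iht}\varphi(t)\rangle=\int_{0}^{\infty}T(x)\hat\varphi(x-h)\,\mathrm{d}x=O(1)$ because $T\in L^{\infty}$ and $\hat\varphi\in L^{1}(\mathbb{R})$; by the characterization \eqref{eqRL2} this means $\hat T$ is a (global) pseudomeasure, so $\mathcal{L}\{S;\cdot\}$ has pseudomeasure boundary behavior at $s=1$.

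For the sufficiency direction the core is a Fej\'er mean estimate. I would choose $\lambda>0$ with $[-\lambda,\lambda]\subset U$ and set $\psi(t)=(1-|t|/\lambda)_{+}$, whose Fourier transform is $2\pi K_{\lambda}$, where $K_{\lambda}\ge 0$ is the Fej\'er kernel with $\int K_{\lambda}=1$. Testing the boundary behavior \eqref{bveq} against $\psi(t)e^{iht}$ and evaluating the transform side gives, for $\sigma>0$, the identity $\int_{-\infty}^{\infty}\mathcal{L}\{T;\sigma+it\}\psi(t)e^{iht}\,\mathrm{d}t=2\pi\int_{0}^{\infty}e^{-\sigma x}T(x)K_{\lambda}(x-h)\,\mathrm{d}x$. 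As $\sigma\to0^{+}$ the left-hand side tends to $\langle g(t),\psi(t)e^{iht}\rangle$, which is $O(1)$ as $h\to\infty$ because $g\in PM_{loc}(U)$ and $\psi$ is supported in $U$, by the defining property \eqref{eqRL2} of local pseudomeasures (Proposition \ref{W-Iprop1} even furnishes rapid decay as $h\to-\infty$, which we shall not need). Thus the regularized Fej\'er means of $T$ are bounded as $h\to\infty$.

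It then remains to upgrade boundedness of these means to $T=O(1)$, and here the decrease condition enters as a genuine \emph{Tauberian} hypothesis. If $T(x_{0})$ is large, the inequality $e^{h}T(x_{0}+h)\ge T(x_{0})-M$ forces $T$ to remain comparably large throughout $[x_{0},x_{0}+\delta]$; positioning the mean near this interval, the main lobe of $K_{\lambda}$ contributes an amount $\gtrsim T(x_{0})$, which must be bounded, and this yields an upper estimate $T(x_{0})\le C$. The lower estimate is obtained symmetrically: the reflected inequality $T(x_{0}-h)\le e^{h}T(x_{0})+M$ shows that a very negative value at $x_{0}$ propagates to $[x_{0}-\delta,x_{0}]$, and, now using the already-established upper bound to control the complementary tail of $K_{\lambda}$, boundedness of the means again forces $T(x_{0})\ge -C$.

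The main obstacle I expect is analytic rather than structural: \eqref{LaplaceConvergenceeq} asserts only conditional convergence, so a priori one may neither interchange $\sigma\to0^{+}$ with the $x$-integration, nor identify the Abel limit with the honest Fej\'er mean $\int_{0}^{\infty}T(x)K_{\lambda}(x-h)\,\mathrm{d}x$, all the more so since $K_{\lambda}$ decays only polynomially while $T$ is not yet known to be of controlled growth. I would resolve this by first extracting from \eqref{LaplaceConvergenceeq} together with the one-sided bound \eqref{deflogbsd} a crude a priori growth estimate for $T$, and by retaining the regularizing factor $e^{-\sigma x}$ throughout the Tauberian step, letting $\sigma\to0^{+}$ only at the very end; it is precisely the decrease condition that renders the tail contributions and the passage to the limit harmless.
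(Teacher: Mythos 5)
Your overall architecture is the paper's: pass to $\Delta(x)=e^{-x}S(x)$, test the boundary value against a kernel with non-negative Fourier transform, identify the resulting means of $\Delta$ via Parseval, and then run a Tauberian step off the decrease condition. However, there is a genuine gap in your Tauberian step, precisely at the upper estimate. From boundedness of $\int_{0}^{\infty}\Delta(x)K_{\lambda}(x-h)\,\mathrm{d}x$ you cannot conclude $\Delta(x_{0})\leq C$ merely because the main lobe contributes $\gtrsim \Delta(x_{0})$: the contribution of $K_{\lambda}$ outside the main lobe could a priori be arbitrarily negative and cancel it. You use the already-established upper bound to control the tail in your \emph{lower} estimate, but the \emph{upper} estimate needs a lower bound on $\Delta$ over the entire support of $K_{\lambda}$, which the purely local inequality \eqref{deflogbsd} does not give. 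The missing ingredient is to iterate \eqref{deflogbsd} into the global one-sided bound $S(u)-S(y)\geq -Ce^{u}$ for all $u\geq y\geq 0$ (the paper's \eqref{eqlsditerated}), which after adjusting $S$ on a finite interval lets one assume $S\geq 0$; then only the upper estimate is needed at all, the tail is trivially nonnegative, and the interchange of $\sigma\to 0^{+}$ with the $x$-integral is justified by monotone convergence. Your proposed substitute --- a ``crude a priori growth estimate'' extracted from \eqref{LaplaceConvergenceeq} and \eqref{deflogbsd} --- is the analogue of the paper's Lemma \ref{lemma for Parseval} ($S(x)=o(e^{\sigma x})$ for each $\sigma>1$), but that is a two-sided \emph{size} bound and cannot supply the required \emph{sign} control of the tail; indeed it does not even make the $\sigma=0$ Fej\'{e}r mean absolutely convergent, since $\Delta(x)=o(e^{\epsilon x})$ for every $\epsilon>0$ is compatible with $\int_{0}^{\infty}|\Delta(x)|K_{\lambda}(x-h)\,\mathrm{d}x=\infty$.

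Two smaller points. First, your kernel $\psi(t)=(1-|t|/\lambda)_{+}$ lies in $A_{c}(\mathbb{R})$ but not in $\mathcal{D}(-\lambda,\lambda)$, so the pairing $\langle g(t),\psi(t)e^{iht}\rangle$ with a general local pseudomeasure $g$ and the appeal to \eqref{eqRL2} need justification (this is what Proposition \ref{W-Icosistencyprop} supplies); the paper instead takes any $\varphi\in\mathcal{D}(-\lambda,\lambda)$ with $\hat{\varphi}\geq 0$, which avoids the issue. Second, once $S\geq 0$ is arranged, the paper's final inequality bounds $e^{-h}S(h)$ directly by the mean $B^{-1}\int_{0}^{\infty}\Delta(x)\hat{\varphi}(x-h)\,\mathrm{d}x$ plus a constant, so the separate ``symmetric'' lower-estimate argument you sketch becomes unnecessary: the lower bound is already contained in \eqref{eqlsditerated}.
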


\begin{proof} Suppose \eqref{W-Ieq1}  holds. It is obvious that $S$ must be log-linearly boundedly decreasing and that \eqref{LaplaceConvergenceeq} is convergent for $\Re e\:s>1$. Set $\Delta(x) = e^{-x}S(x)$ and decompose it as $\Delta=\Delta_{1}+\Delta_{2}$, where $\Delta_{2}\in L^{\infty}(\mathbb{R})$ and $\Delta_{1}$ is compactly supported. The boundary value of \eqref{LaplaceConvergenceeq} on $\Re e\: s=1$ is the Fourier transform of $\Delta$, that is, the distribution $\hat{\Delta}_{1}+\hat{\Delta}_2$. By definition $\hat{\Delta}_{2}\in PM(\mathbb{R})$, while $\hat{\Delta}_{1}\in C^{\infty}(\mathbb{R}) \subset PF_{loc}(\mathbb{R})$ because it is in fact the restriction of an entire function to the real line. So, actually $\hat{\Delta}\in PM_{loc}(\mathbb{R})$.

Let us now prove that the conditions are sufficient for \eqref{W-Ieq1}. Since changing a function on a finite interval does not violate the local pseudomeasure behavior of the Laplace transform, we may assume that \eqref{deflogbsd} holds for all $x\geq 0$. Iterating the inequality \eqref{deflogbsd}, one finds that there is $C$ such that 
\begin{equation} \label{eqlsditerated}
S(u) - S(y) \geq -Ce^{u} \quad \mbox{for all } u \geq y\geq 0.
\end{equation}
We may thus assume without loss of generality that $S$ is positive. In fact, if necessary, one may replace $S$ by $\tilde{S}(u) = S(u) + S(0) + Ce^{u}$, whose Laplace transform also admits local pseudomeasure boundary behavior at $s = 1$.

We set again $\Delta(x) = e^{-x}S(x)$, its Laplace transform is $ \mathcal{L}\{S;s+1\}$, so that $\mathcal{L}\{\Delta;s\}$ has pseudomeasure boundary behavior at $s = 0$. There are then a sufficiently small $\lambda>0$ and a local pseudomeasure $g$ on $(-\lambda,\lambda)$ such that $\lim_{\sigma\to 0^{+}}\mathcal{L}\{\Delta;\sigma+it\}=g(t)$ in $\mathcal{D}'(-\lambda,\lambda)$.  Let $\varphi$ be an arbitrary (non-identically zero) smooth function with support in $(-\lambda,\lambda)$ such that its Fourier transform $\hat{\varphi}$ is non-negative.
By the monotone convergence theorem and the equality $\mathcal{L}\{\Delta; \sigma + it\} = \mathcal{F}\{\Delta(x) e^{-\sigma x};t\}$ in $\mathcal{S}'(\R)$,
\begin{align*}
\int_{0}^{\infty} \Delta(x) \hat{\varphi}(x-h)\mathrm{d}x & =  \lim_{\sigma \rightarrow 0^{+}} \int^{\infty}_{0} \Delta(x)e^{-\sigma x} \hat{\varphi}(x-h)  \mathrm{d}x\\
& = \lim_{\sigma \rightarrow 0^{+}}\int_{-\infty}^{\infty}\mathcal{L}\{\Delta; \sigma + it\} e^{iht}\varphi(t)  \mathrm{d}t
\\
&
= \left\langle g(t),e^{iht}\varphi(t)\right\rangle =O(1), \quad \mbox{as } h \rightarrow \infty.
\end{align*}
Set now $B = \int^{\infty}_{0} e^{-x}\hat{\varphi}(x)\mathrm{d}x>0$. Appealing to (\ref{eqlsditerated}) once again, we obtain
\begin{align*}
 e^{-h}S(h) & = \frac{1}{B} \int^{\infty}_{0} e^{-x-h}S(h)\hat{\varphi}(x)\mathrm{d}x 
 \\
 &
 \leq \frac{1}{B}\int^{\infty}_{0}e^{-x-h}S(x+h)\hat{\varphi}(x) \mathrm{d}x + \frac{C}{B} \int^{\infty}_{0}\hat{\varphi}(x) \mathrm{d}x \\
& \leq \frac{1}{B}\int^{\infty}_{0} \Delta(x)\hat{\varphi}(x-h)\mathrm{d}x + \frac{C}{B} \int^{\infty}_{0}\hat{\varphi}(x) \mathrm{d}x = O(1). 
\end{align*}
\end{proof}

If one reads the above proof carefully, one realizes that we do not have to ask the existence of $\lambda > 0$ such that 
\begin{equation*}
 \left\langle g(t), e^{iht} \varphi(t)\right\rangle = O(1), \quad h\to\infty, \quad  \mbox{for all }\varphi \in \mathcal{D}(-\lambda,\lambda),
\end{equation*}
where $g$ is as in the proof of Proposition \ref{thboundedness}. Indeed, one only needs one appropriate test function in this relation. To generalize Proposition \ref{thboundedness}, we introduce the ensuing terminology. The Wiener algebra is $A(\mathbb{R})= \mathcal{F}(L^{1}(\mathbb{R}))$. We write $A_{c}(\mathbb{R})$ for the subspace of  $A(\mathbb{R})$ consisting of compactly supported functions.

\begin{definition}
\label{W-Idef1} 
An analytic function $G(s)$ on the half-plane $\Re e \: s > \alpha$ is said to have pseudomeasure boundary behavior (pseudofunction boundary behavior) on $\Re e \: s = \alpha$ with respect to  $\varphi \in A_{c}(\mathbb{R})$ if there is $N>0$ such that
\[
I_{\varphi}(h)= \lim_{\sigma \rightarrow \alpha^{+}} \int^{\infty}_{-\infty} G(\sigma + it) e^{iht}\varphi(t) \mathrm{d}t
\]
exists for every $h\geq N$ and 
$I_{\varphi}(h)= O(1)$ ($I_{\varphi}(h)=o(1)$, resp.) as $h \rightarrow \infty$.
\end{definition}

Let us check that the notions from Definition \ref{W-Idef1} generalize those of local pseudomeasures and pseudofunctions.

\begin{proposition}
\label{W-Icosistencyprop} Let $G(s)$ be analytic on the half-plane $\Re e \: s > \alpha$ and have local pseudomeasure (local pseudofunction) boundary behavior on $\alpha+iU$. Then, $G$ has pseudomeasure (pseudofunction) boundary behavior on $\Re e \: s = \alpha$ with respect to every $\varphi \in A_{c}(\mathbb{R})$ with $\operatorname*{supp} \varphi \subset U$.
\end{proposition}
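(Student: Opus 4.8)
The plan is to prove that local pseudomeasure (pseudofunction) boundary behavior on $\alpha+iU$ implies the corresponding boundary behavior with respect to every $\varphi\in A_c(\mathbb{R})$ supported in $U$. The central difficulty is that Definition~\ref{W-Idef1} allows $\varphi$ to lie merely in $A_c(\mathbb{R})$, whereas the definition of local pseudomeasure behavior \eqref{bveq} and the Riemann--Lebesgue characterization \eqref{eqRL2} are phrased only for genuine test functions $\varphi\in\mathcal{D}(U)$. So the crux is a density/approximation argument: one must pass from smooth compactly supported multipliers to compactly supported multipliers that are only Fourier transforms of $L^1$ functions.

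First I would fix $\varphi\in A_c(\mathbb{R})$ with $\operatorname{supp}\varphi\subset U$ and choose a cutoff $\psi\in\mathcal{D}(U)$ that equals $1$ on a neighborhood of $\operatorname{supp}\varphi$, so that $\varphi=\psi\varphi$. Let $g\in PM_{loc}(U)$ (resp.\ $g\in PF_{loc}(U)$) be the boundary distribution supplied by \eqref{bveq}. The key observation is that $I_\varphi(h)$ should be identified with the pairing $\langle g(t),e^{iht}\varphi(t)\rangle$. To justify this, I would use that smooth functions multiply local pseudomeasures and pseudofunctions (as noted after \eqref{eqRL2}), so $\psi g\in PM(\mathbb{R})$ (resp.\ $PF(\mathbb{R})$) is a genuine global object: its Fourier transform $\widehat{\psi g}$ is in $L^\infty(\mathbb{R})$ (resp.\ in $C_0(\mathbb{R})$). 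Writing $\varphi=\hat{u}$ for some $u\in L^1(\mathbb{R})$ with compact support, the quantity $\langle g,e^{iht}\varphi\rangle=\langle \psi g, e^{iht}\hat u\rangle$ becomes, after transferring the Fourier transform, a convolution-type expression $\int_{\mathbb{R}}\widehat{\psi g}(h-x)\,u(x)\,\mathrm{d}x$ or the value at $h$ of a translate paired against $u$; this is $O(1)$ in the pseudomeasure case and $o(1)$ in the pseudofunction case, the latter because $C_0$ convolved with (the reflection of) an $L^1$ function still tends to $0$.

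The remaining step is to verify that the limit defining $I_\varphi(h)$ in Definition~\ref{W-Idef1} exists for large $h$ and genuinely equals this pairing. Here I would exploit the same splitting used in the proof of Proposition~\ref{thboundedness}: the integral $\int_{-\infty}^{\infty}G(\sigma+it)e^{iht}\varphi(t)\,\mathrm{d}t$ should converge as $\sigma\to\alpha^+$ to $\langle g(t),e^{iht}\varphi(t)\rangle$. Since $\varphi$ need not be smooth, I cannot directly invoke \eqref{bveq}; instead I would approximate $\varphi$ by functions in $\mathcal{D}(U)$ in the $A(\mathbb{R})$-norm (equivalently, approximate $u$ in $L^1$) and control the error uniformly using the $L^\infty$ bound on $\widehat{\psi g}$ together with the boundary-value convergence, which holds uniformly enough on the compact support via \eqref{bveq}.

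The step I expect to be the main obstacle is precisely this interchange of the $\sigma\to\alpha^+$ limit with the non-smooth multiplier $\varphi$: one must ensure that the approximation of $\varphi$ by smooth functions does not destroy the existence of the limit for each individual large $h$, and that the resulting estimate is genuinely $o(1)$ (not merely $O(1)$) in the pseudofunction case. I would handle this by reducing everything to the global pseudomeasure/pseudofunction $\psi g$ and using that the $A(\mathbb{R})$-approximation translates into an $L^1$-approximation of $u$, against which both the $L^\infty$ bound (for the $O(1)$ conclusion) and the $C_0$ property together with a standard density argument (for the $o(1)$ conclusion) behave continuously.
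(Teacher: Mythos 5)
Your first half is sound and matches the endgame of the paper's argument: once $I_{\varphi}(h)$ is identified with the pairing of the boundary pseudomeasure against $e^{iht}\varphi(t)$, writing $\varphi=\hat{u}$ with $u\in L^{1}(\mathbb{R})$ turns that pairing into a convolution of an $L^{\infty}$ (resp.\ vanishing-at-infinity) function with $u$, whence $O(1)$ (resp.\ $o(1)$). The genuine gap is in the step you yourself flag as the main obstacle, and your proposed fix does not close it. To prove that $\lim_{\sigma\to\alpha^{+}}\int_{-\infty}^{\infty}G(\sigma+it)e^{iht}\varphi(t)\,\mathrm{d}t$ exists and equals the boundary pairing by approximating $\varphi$ with $\varphi_{n}\in\mathcal{D}(U)$ in the $A(\mathbb{R})$-norm, you must bound the error $\int_{-\infty}^{\infty}G(\sigma+it)e^{iht}(\varphi-\varphi_{n})(t)\,\mathrm{d}t$ by a constant times $\|\varphi-\varphi_{n}\|_{A}$ \emph{uniformly in} $\sigma>\alpha$. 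The $L^{\infty}$ bound on $\widehat{\psi g}$ controls only the boundary object at $\sigma=\alpha$; for $\sigma>\alpha$ the hypothesis \eqref{bveq} gives merely distributional convergence, and the equicontinuity it yields via Banach--Steinhaus is with respect to $\mathcal{D}$-seminorms involving derivatives, which are not dominated by the $A(\mathbb{R})$-norm. Nothing in your setup provides a uniform pseudomeasure-norm bound on $\psi(t)G(\sigma+it)$ as $\sigma\to\alpha^{+}$, and without such a bound the approximation argument collapses: neither the existence of the limit for fixed large $h$ nor the $o(1)$ conclusion survives.

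The paper supplies exactly the missing uniform representation. As in Proposition \ref{W-Iprop1}, one writes $g=\hat{f}$ on a neighborhood $V$ of $\operatorname*{supp}\varphi$ with $f\in L^{\infty}(\mathbb{R})$ (and $f$ vanishing at infinity in the pseudofunction case), and the edge-of-the-wedge theorem shows that $G_{1}(s)=G(s)-\mathcal{L}\{f_{+};s-\alpha\}$ extends analytically across $\alpha+iV$, where $f_{+}=fH$. The $G_{1}$ contribution to $I_{\varphi}(h)$ is then evaluated directly on the boundary (a compactly supported continuous integrand, so Riemann--Lebesgue gives $o(1)$), while for the $\mathcal{L}\{f_{+}\}$ contribution Parseval yields $\int_{0}^{\infty}e^{-\sigma x}f(x)\hat{\varphi}(x-h)\,\mathrm{d}x$, whose $\sigma\to 0^{+}$ limit exists by dominated convergence precisely because $e^{-\sigma x}f_{+}(x)$ is uniformly bounded by $\|f\|_{L^{\infty}}$. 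This explicit decomposition is the idea your proof is missing; once it is in place, your convolution computation delivers the $O(1)$ and $o(1)$ conclusions as intended.
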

\begin{proof} Fix $\varphi \in A_{c}(\mathbb{R})$ with $\operatorname*{supp} \varphi \subset U$.
Let $f\in L^{\infty}(\mathbb{R})$ be such that $\lim_{\sigma\to\alpha^{+}}G(\sigma+it)=\hat{f}(t)$, distributionally, on a neighborhood $V\subset U$ of $\operatorname*{supp} \varphi$. As in the proof of Proposition \ref{W-Iprop1}, one deduces from the edge-of-the-wedge theorem that $G_{1}(s)=G(s)-\mathcal{L}\{f_{+}, s-\alpha\}$ has analytic continuation through $\alpha+iV$, where $f_{+}(x)=f(x) H(x)$. Thus,
\begin{align*}
I_{\varphi}(h)&= \int_{-\infty}^{\infty} G_{1}(\alpha+it)\varphi(t)e^{ih t}\: \mathrm{d}t+ \lim_{\sigma\to0^{+}} \int_{-\infty}^{\infty} \mathcal{L}\{f_{+}; \sigma+it\} \varphi(t) e^{ih t}\mathrm{d}t
\\
&
= o(1)+ \lim_{\sigma\to0^{+}} \int_{0}^{\infty}e^{-\sigma x} f_{+}(x)\hat{\varphi}(x-h) \mathrm{d}x= o(1)+  \int_{-\infty}^{\infty} f(x+h)\hat{\varphi}(x) \mathrm{d}x,
\end{align*}
which is $O(1)$. In the pseudofunction case we may additionally require $\lim_{|x|\to\infty}f(x)=0$, so that $I_{\varphi}(h)=o(1)$.
\end{proof}

Exactly the same argument given in proof of Proposition \ref{thboundedness} would work when pseudomeasure boundary behavior of $\mathcal{L}\{S;s\}$ at $s=1$ is replaced by pseudomeasure boundary behavior on $\Re e\:s=1$ with respect to a single $\varphi\in A_{c}(\mathbb{R})\setminus\{0\}$ with non-negative Fourier transform (which implies $\varphi(0)\neq 0$) if one is able to justify the Parseval relation
$$
 \int^{\infty}_{-\infty} \Delta(x)e^{-\sigma x} \hat{\varphi}(x-h)  \mathrm{d}x=\int_{-\infty}^{\infty}\mathcal{L}\{\Delta; \sigma + it\} e^{iht} \varphi(t) \mathrm{d}t.
$$
But this holds in the $L^{2}$-sense as follows from the next simple lemma\footnote{More precisely, we first apply Lemma \ref{lemma for Parseval} and then modify $S$ in a finite interval so that we may assume that $\Delta(x)e^{-\sigma x}$ belongs to $L^{2}(\mathbb{R})$ for each $\sigma>0$. Clearly, $\varphi\in L^{2}(\mathbb{R})$ as well.}.
\begin{lemma}
\label{lemma for Parseval} Let $S\in L^{1}_{loc}[0,\infty)$ be log-linearly boundedly decreasing with convergent Laplace transform for $\Re e\:s>1$. Then, $S(x)=o(e^{\sigma x})$, $x\to\infty$, for each $\sigma>1$.
\end{lemma}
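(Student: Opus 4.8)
The plan is to extract the pointwise bound from the mere convergence of $\mathcal{L}\{S;s\}$ at real points $s=\sigma>1$, using the Tauberian hypothesis to control the local behavior of $S$. First I would fix an arbitrary $\sigma>1$ and reduce to the case of a \emph{nonnegative} $S$. Exactly as in the proof of Proposition \ref{thboundedness}, modifying $S$ on a bounded interval lets us assume that \eqref{deflogbsd} holds for every $x\geq 0$, so that the iterated inequality \eqref{eqlsditerated} is at our disposal; replacing $S$ by $\tilde{S}(x)=S(x)+S(0)+Ce^{x}$ then makes it nonnegative without affecting either the convergence of the Laplace transform for $\Re e\:s>1$ or the asserted conclusion, since $\tilde{S}(x)-S(x)=O(e^{x})=o(e^{\sigma x})$.

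Next comes the key estimate. Rewriting \eqref{eqlsditerated} as $S(x)\leq S(u)+Ce^{u}$ for all $u\geq x$ and integrating in $u$ over $[x,x+\delta]$ yields
\[
\delta\, S(x)\leq \int_{x}^{x+\delta} S(u)\,\mathrm{d}u + C\, e^{x}(e^{\delta}-1).
\]
The remaining task is to estimate the integral on the right. Because $S\geq 0$, I would bound it by pulling out the exponential weight at $\sigma$,
\[
\int_{x}^{x+\delta} S(u)\,\mathrm{d}u\leq e^{\sigma(x+\delta)}\int_{x}^{x+\delta} e^{-\sigma u}S(u)\,\mathrm{d}u,
\]
and then invoke convergence: since $\mathcal{L}\{S;s\}$ converges at $s=\sigma$, the partial integrals $T\mapsto \int_{0}^{T}e^{-\sigma u}S(u)\,\mathrm{d}u$ converge as $T\to\infty$, whence the increment $\int_{x}^{x+\delta}e^{-\sigma u}S(u)\,\mathrm{d}u$ tends to $0$ as $x\to\infty$. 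Consequently the right-hand side of the first display is $o(e^{\sigma x})+O(e^{x})$.

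Combining the two displays gives $S(x)\leq o(e^{\sigma x})+O(e^{x})$, and since $\sigma>1$ the term $O(e^{x})$ is itself $o(e^{\sigma x})$; this proves $S(x)=o(e^{\sigma x})$ for the chosen $\sigma$, and hence for every $\sigma>1$. The step I expect to require the most care is the reduction to a nonnegative $S$: the comparison $\int_{x}^{x+\delta}S(u)\,\mathrm{d}u\leq e^{\sigma(x+\delta)}\int_{x}^{x+\delta}e^{-\sigma u}S(u)\,\mathrm{d}u$ crucially uses $S\geq 0$, and without that sign information there is no direct way to convert the convergence of the (possibly only conditionally convergent) Laplace integral into a pointwise upper bound. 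Everything else amounts to the routine manipulation of turning convergence of a weighted integral into the vanishing of its increments over windows of fixed length $\delta$.
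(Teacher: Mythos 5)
Your proof is correct and follows essentially the same route as the paper's: both reduce to a nonnegative $S$ satisfying \eqref{eqlsditerated}, compare $S(x)$ with an average of $S$ over a window to its right, control the discrepancy by the boundedly-decreasing bound $S(x)-S(u)\leq Ce^{u}$, and use the Cauchy criterion for the convergent integral $\int_{0}^{\infty}e^{-\sigma u}S(u)\,\mathrm{d}u$ to make the averaged term $o(e^{\sigma x})$. The only cosmetic difference is that the paper builds the weight $e^{-\sigma u}$ into the average over $[h,h+1]$ from the outset, whereas you average without the weight over $[x,x+\delta]$ and then insert it using $S\geq 0$.
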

\begin{proof}
As in the proof of Theorem \ref{thboundedness}, we may assume  that \eqref{eqlsditerated} holds and $S$ is positive. For fixed $\sigma>1$,
\begin{align*}
0<e^{-\sigma h} S(h) &= \frac{\sigma}{1-e^{-\sigma}}\int_{h}^{h+1}(S(h)-S(x))e^{-\sigma x}\mathrm{d}x + o_{\sigma}(1)
\\
&
\leq \frac{\sigma C e^{-(\sigma-1)h}(1-e^{1-\sigma})}{(\sigma-1)(1-e^{-\sigma})} + o_{\sigma}(1)
=o_{\sigma}(1), \quad h\to\infty.
\end{align*}
\end{proof}
The following alternative version of Proposition \ref{thboundedness} should now be clear.

\begin{corollary} \label{thboundedl1} Let $S\in L^{1}_{loc}[0,\infty)$ and let $\varphi\in A_{c}(\mathbb{R})$ be non-identically zero and have non-negative Fourier transform. Then, \eqref{W-Ieq1} holds if and only if $S$ is log-linearly boundedly decreasing, \eqref{LaplaceConvergenceeq} holds, and 
$\mathcal{L}\{S;s\}$ has pseudomeasure boundary behavior on $\Re e\: s=1$ with respect to $\varphi$.
\end{corollary}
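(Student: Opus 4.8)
The plan is to imitate the proof of Proposition \ref{thboundedness}, the only new ingredient being that the full local pseudomeasure hypothesis is weakened to its single-test-function version, which forces us to replace the distributional pairing used there by an honest Parseval identity.

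For the direct implication, suppose \eqref{W-Ieq1} holds. That $S$ is log-linearly boundedly decreasing and that \eqref{LaplaceConvergenceeq} converges for $\Re e\: s>1$ is immediate, exactly as in Proposition \ref{thboundedness}. Moreover, the forward part of that proof in fact shows that the boundary value of $\mathcal{L}\{S;s\}$ on $\Re e\: s=1$ is $\hat\Delta\in PM_{loc}(\mathbb{R})$, i.e. $\mathcal{L}\{S;s\}$ has local pseudomeasure boundary behavior on the whole line $\Re e\: s=1$. Invoking Proposition \ref{W-Icosistencyprop} with $U=\mathbb{R}$ then delivers pseudomeasure boundary behavior with respect to our fixed $\varphi$, which settles this direction (no use of $\hat\varphi\geq0$ is needed here).

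For the converse, I would run the sufficiency argument of Proposition \ref{thboundedness} verbatim, but using only the single $\varphi$. After iterating \eqref{deflogbsd} to reduce to the case in which $S$ is positive and \eqref{eqlsditerated} holds, set $\Delta(x)=e^{-x}S(x)$, so that the hypothesis becomes $I_\varphi(h)=O(1)$ as $h\to\infty$ for $\mathcal{L}\{\Delta;s\}=\mathcal{L}\{S;s+1\}$ on $\Re e\: s=0$. The crux is the Parseval identity
\[
\int_{-\infty}^{\infty}\Delta(x)e^{-\sigma x}\hat\varphi(x-h)\,\mathrm{d}x=\int_{-\infty}^{\infty}\mathcal{L}\{\Delta;\sigma+it\}e^{iht}\varphi(t)\,\mathrm{d}t .
\]
Granting it, I let $\sigma\to0^{+}$ and use $\Delta\geq0$, $\hat\varphi\geq0$ together with monotone convergence to obtain $\int_{0}^{\infty}\Delta(x)\hat\varphi(x-h)\,\mathrm{d}x=I_\varphi(h)=O(1)$. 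With $B=\int_{0}^{\infty}e^{-x}\hat\varphi(x)\,\mathrm{d}x>0$ (positive since $\hat\varphi\geq0$ is non-trivial), applying \eqref{eqlsditerated} exactly as in Proposition \ref{thboundedness} bounds $e^{-h}S(h)$ by a fixed multiple of $\int_{0}^{\infty}\Delta(x)\hat\varphi(x-h)\,\mathrm{d}x$ plus $\frac{C}{B}\int_{0}^{\infty}\hat\varphi(x)\,\mathrm{d}x$, whence $S(h)=O(e^{h})$.

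The main obstacle is justifying the displayed Parseval identity, i.e. the content of the footnote. Here I would appeal to Plancherel's theorem, which applies once both factors lie in $L^{2}(\mathbb{R})$. For $\varphi$ this is clear, as $\varphi\in A_{c}(\mathbb{R})$ is bounded with compact support. For $\Delta(x)e^{-\sigma x}=e^{-(1+\sigma)x}S(x)$ I would first invoke Lemma \ref{lemma for Parseval}, which gives $S(x)=o(e^{\sigma'x})$ for every $\sigma'>1$ and hence genuine exponential decay of $\Delta(x)e^{-\sigma x}$ at $+\infty$ for each $\sigma>0$; the only remaining difficulty, square-integrability near the origin, I remove by altering $S$ on a bounded interval, a modification that changes $\mathcal{L}\{S;s\}$ by an entire function and so preserves both \eqref{W-Ieq1} and the pseudomeasure boundary behavior with respect to $\varphi$.
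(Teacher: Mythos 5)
Your proposal is correct and follows essentially the same route as the paper: the authors likewise reduce to the argument of Proposition \ref{thboundedness} with the single test function $\varphi$, justify the Parseval relation via Plancherel after using Lemma \ref{lemma for Parseval} and a modification of $S$ on a finite interval to place $\Delta(x)e^{-\sigma x}$ in $L^{2}(\mathbb{R})$, and obtain the necessity direction from Propositions \ref{thboundedness} and \ref{W-Icosistencyprop}. You have merely written out explicitly the steps the paper leaves to its footnote.
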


Next, we proceed to extend the actual Wiener-Ikehara theorem.

\begin{theorem} \label{thlocalpseudofunction} Let $S\in L^{1}_{loc}[0,\infty)$. Then,
\begin{equation} \label{eqresultth}
 S(x) \sim ae^{x}
\end{equation}
holds if and only if $S$ is log-linearly slowly decreasing, \eqref{LaplaceConvergenceeq} holds, and 
\begin{equation}
\label{LaplacePoleeq}
\mathcal{L}\{S;s\} - \frac{a}{s-1}
\end{equation}
admits local pseudofunction boundary behavior on the whole line $\Re e \: s = 1$.
\end{theorem}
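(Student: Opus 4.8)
The plan is to prove the two implications separately, treating necessity as a variant of the proof of Proposition~\ref{thboundedness} and concentrating the real work on sufficiency. \emph{Necessity:} suppose $S(x)\sim ae^{x}$ and put $\rho(x)=e^{-x}S(x)-aH(x)$, a function supported in $[0,\infty)$ with $\rho(x)\to 0$ and $\mathcal{L}\{\rho;s\}=\mathcal{L}\{S;s+1\}-a/s$, so that the boundary value of \eqref{LaplacePoleeq} on $\Re e\:s=1$ is $g:=\hat\rho$. That $S$ is log-linearly slowly decreasing follows from $(S(x+h)-S(x))/e^{x}=a(e^{h}-1)+o(1)$ uniformly for $0\le h\le\delta$, upon taking $\delta$ small, and $S=O(e^{x})$ makes \eqref{LaplaceConvergenceeq} (absolutely) convergent for $\Re e\:s>1$. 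For the boundary behavior I would split $\rho=\rho_{1}+\rho_{2}$, with $\rho_{1}$ compactly supported, whence $\hat\rho_{1}\in C^{\infty}(\mathbb{R})\subset PF_{loc}(\mathbb{R})$, and $\rho_{2}\in L^{\infty}(\mathbb{R})$ with $\rho_{2}(x)\to 0$; the Parseval identity $\langle\hat\rho_{2},e^{iht}\varphi\rangle=\int_{-\infty}^{\infty}\rho_{2}(h+u)\hat\varphi(u)\mathrm{d}u$ gives $o(1)$ as $h\to+\infty$ by dominated convergence and as $h\to-\infty$ by Proposition~\ref{W-Iprop1}, so $g=\hat\rho\in PF_{loc}(\mathbb{R})$, i.e. \eqref{LaplacePoleeq} has local pseudofunction boundary behavior on the whole line.

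\emph{Sufficiency:} here one first secures boundedness. The explicit boundary value $\pi a\,\delta(t)-ia\,\mathrm{pv}(1/t)$ of $a/(s-1)$ on $\Re e\:s=1$ is a local pseudomeasure, and $G$ has local pseudomeasure boundary behavior a fortiori, so $\mathcal{L}\{S;s\}$ has pseudomeasure boundary behavior at $s=1$; since log-linear slow decrease trivially implies log-linear bounded decrease, Proposition~\ref{thboundedness} yields $S(x)=O(e^{x})$. After modifying $S$ on a finite interval we may thus take $\rho(x)=e^{-x}S(x)-aH(x)$ to be bounded, with $g=\hat\rho\in PF_{loc}(\mathbb{R})$ the pseudofunction supplied by the hypothesis. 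A short computation, using $S(x+u)-S(x)\ge-\varepsilon e^{x}$ together with the boundedness of $\rho$, converts log-linear slow decrease into the additive statement that for every $\varepsilon>0$ there are $\delta,x_{0}>0$ with $\rho(x+u)-\rho(x)\ge-\varepsilon$ for $0\le u\le\delta$ and $x\ge x_{0}$. It remains to show $\rho(x)\to 0$.

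For the Tauberian step I would use the Fejér kernel. For $\lambda>0$ set $\varphi_{\lambda}(t)=(1-|t|/\lambda)_{+}\in A_{c}(\mathbb{R})$, so that $\hat\varphi_{\lambda}(u)=\lambda\bigl(\sin(\lambda u/2)/(\lambda u/2)\bigr)^{2}\ge 0$ is an approximate identity (with $\int\hat\varphi_{\lambda}=2\pi$) concentrating as $\lambda\to\infty$, its tail mass outside $(-\delta/2,\delta/2)$ being $O(1/(\lambda\delta))$. Since \eqref{LaplacePoleeq} has local pseudofunction boundary behavior on $\mathbb{R}$, Proposition~\ref{W-Icosistencyprop} gives pseudofunction boundary behavior with respect to each $\varphi_{\lambda}$; unwinding the Parseval relation (legitimate because $\rho\in L^{\infty}$, cf. the remark following Proposition~\ref{thboundedness}) this reads $R_{\lambda}(h):=\int_{-\infty}^{\infty}\rho(h+u)\hat\varphi_{\lambda}(u)\mathrm{d}u=o(1)$, $h\to\infty$, for each fixed $\lambda$. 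I then argue by contradiction in two symmetric cases. If $\limsup_{h}\rho(h)=2c>0$, choose $h_{k}\to\infty$ with $\rho(h_{k})\ge c$ and apply \emph{forward} slow decrease with $\varepsilon<c/2$ to get $\rho\ge c/2$ on $[h_{k},h_{k}+\delta]$; centering the kernel at $h_{k}+\delta/2$ and taking $\lambda$ large makes $R_{\lambda}(h_{k}+\delta/2)$ bounded below by a positive constant (the tails controlled by $\|\rho\|_{\infty}$ times the small outer mass), contradicting $R_{\lambda}\to0$. If $\liminf_{h}\rho(h)=-2c<0$, the \emph{backward} reading of slow decrease, $\rho(x)\le\rho(h_{k})+\varepsilon$ for $x\in[h_{k}-\delta,h_{k}]$, forces $\rho\le-c/2$ on that interval and makes $R_{\lambda}(h_{k}-\delta/2)$ bounded above by a negative constant, again a contradiction. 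Hence $\rho(x)\to 0$, i.e. $S(x)\sim ae^{x}$.

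The step I expect to be the main obstacle is this last one, and its delicate feature is exactly what forces the full-line hypothesis: to resolve $\rho$ at the scale $\delta$ produced by slow decrease, the Fejér kernel $\hat\varphi_{\lambda}$ must be concentrated below scale $\delta$, which by the uncertainty principle requires $\varphi_{\lambda}$ to have support of size $\gtrsim 1/\delta$, growing without bound as $\delta\to 0$. Thus one genuinely needs pseudofunction behavior with respect to test functions of arbitrarily large support, i.e. on the entire line $\Re e\:s=1$, in contrast with the single-point hypothesis sufficient for the boundedness statement of Proposition~\ref{thboundedness}. The $\liminf$ direction is moreover the more subtle of the two cases, since slow decrease controls forward differences directly and one must pass to its backward reformulation to manufacture a genuine interval on which $\rho$ stays bounded away from $0$.
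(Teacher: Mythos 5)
Your proof is correct, but the final Tauberian step follows a genuinely different route from the paper's. Both arguments open identically: the pole term $a/(s-1)$ is a global pseudomeasure on $\Re e\:s=1$, so Proposition \ref{thboundedness} gives $e^{-x}S(x)=O(1)$, and one is reduced to showing that $\rho(x)=e^{-x}S(x)-aH(x)$ tends to $0$ knowing that $\hat{\rho}\in PF_{loc}(\mathbb{R})$. At that point the paper stays on the ``physical'' side with compactly supported kernels: it observes that $\int\rho(x+h)\phi(x)\,\mathrm{d}x\to0$ for all $\phi\in\mathcal{F}(\mathcal{D}(\mathbb{R}))$, upgrades this to all $\phi\in\mathcal{S}(\mathbb{R})$ by weak boundedness of the translates of $\rho$ plus the Banach--Steinhaus theorem, and then tests directly against a non-negative bump supported in $(0,\delta)$ (resp.\ $(-\delta,0)$) to get $\limsup\leq a$ (resp.\ $\liminf\geq a$) with no tail estimates at all. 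You instead keep the kernel class $\mathcal{F}^{-1}(A_c)$ forced by Proposition \ref{W-Icosistencyprop}, use the Fej\'{e}r kernel $\hat{\varphi}_{\lambda}$ concentrated at scale $1/\lambda\ll\delta$, and close by contradiction, paying with the $O(1/(\lambda\delta))$ tail bounds controlled by $\|\rho\|_{\infty}$; your backward reading of slow decrease for the $\liminf$ case is the exact analogue of the paper's choice of a bump supported in $(-\delta,0)$. Your route avoids the functional-analytic density/Banach--Steinhaus step and is in fact closer in spirit to what the paper does later in Theorem \ref{thol1} and Corollary \ref{W-Ic2} (where the Fej\'{e}r family reappears), whereas the paper's argument for this theorem is shorter because the test function can be taken compactly supported inside $(0,\delta)$. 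You also supply details for the ``straightforward'' necessity direction, and your closing heuristic about needing supports of size $\gtrsim1/\delta$ correctly identifies why the whole-line hypothesis (or a family like $\{\varphi_{\lambda}\}$ with unbounded supports) is what the concentration step consumes. All steps check out; this is a valid alternative proof.
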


\begin{proof} The direct implication is straightforward. Let us show the converse. We may assume again that $S$ is positive. As before, we set $\Delta(x) = e^{-x}S(x)$. Applying Proposition \ref{thboundedness}, we obtain $\Delta(x) = O(1)$, because $1/(s-1)$ is actually a global pseudomeasure on $\Re e\:s=1$. In particular, we now know that $\Delta\in \mathcal{S}'(\mathbb{R})$. Let $H$ be the Heaviside function. Note that the Laplace transform of $H$ is $1/s$, $\Re e\:s>0$. We then have that the Fourier transform of $\Delta-aH$ is the boundary value of $\mathcal{L}\{S;s+1\}-a/s$ on $\Re e\:s=0$, and thus a local pseudofunction on the whole real line; but this just means that for each $\phi \in \mathcal{F}(\mathcal{D(\mathbb{R})})$
\[
\left\langle \Delta(x) -aH(x), \phi(x-h)\right\rangle =\frac{1}{2\pi}\left\langle \hat{\Delta}(t) -a\hat{H}(t), \hat{\phi}(-t)e^{ith}\right\rangle  =o(1), \quad h\to\infty,
\] 
i.e.,
\begin{equation}\label{eqconv}
\int_{-\infty}^{\infty}\Delta(x+h)\phi(x)\:\mathrm{d}x =a \int_{-\infty}^{\infty}\phi(x)\:\mathrm{d}x+o(1), \quad h\to\infty.
\end{equation} 
Since $\Delta$ is bounded for large arguments, its set of translates $\Delta(x+h)$ is weakly bounded in $\mathcal{S}'(\mathbb{R})$. Also, $\mathcal{F}(\mathcal{D}(\mathbb{R}))$  is dense in $\mathcal{S}(\mathbb{R})$.  We can thus apply the Banach-Steinhaus theorem to conclude that (\ref{eqconv}) remains valid\footnote{In the terminology of \cite{p-s-v}, this means that $\Delta$ has the S-limit $a$ at infinity.} for all $\phi \in \mathcal{S}(\mathbb{R})$. Now, let $\varepsilon > 0$ and choose $\delta$ and $x_{0}$ such that (\ref{defloglsd}) is fulfilled. Pick a non-negative test function $\phi\in\mathcal{D}(0,\delta)$ such that $\int^{\delta}_{0} \phi(x) \mathrm{d}x = 1$. Then,
\begin{align*}
 \Delta(h) & = \int^{\delta}_{0} \Delta(h)\phi(x) \:\mathrm{d}x \leq \varepsilon + \int^{\delta}_{0} e^{x}\Delta(x+h)\phi(x)\: \mathrm{d}x \\
& \leq \varepsilon + e^{\delta}\int^{\delta}_{0} \Delta(x+h) \:\phi(x) \mathrm{d}x = \varepsilon + e^{\delta} (a +o(1)), \quad h\geq x_{0},
\end{align*}          
where we have used \eqref{eqconv}. Taking first the limit superior as $h\to\infty$, and then letting $\delta \rightarrow 0^{+}$ and $\varepsilon \rightarrow 0^{+}$, we obtain $\limsup_{h \rightarrow \infty} \Delta(h)  \leq a$. The reverse inequality with the limit inferior follows from a similar argument, but now choosing the test function $\phi$ with support in $(-\delta,0)$. Hence, (\ref{eqresultth}) has been established.
\end{proof}

We can further generalize Theorem \ref{thlocalpseudofunction} by using the following simple consequence of Wiener's local division lemma. 

\begin{lemma} \label{lemwienerdivision} Let $\phi_{1},\phi_{2} \in L^{1}(\mathbb{R})$ be such that $\operatorname*{supp } \hat{\phi_{2}}$ is compact and that $\hat{\phi}_{1} \neq 0$ on $\operatorname*{supp } \hat{\phi}_{2}$. Let $\tau\in L^{\infty}(\mathbb{R})$ satisfy $(\tau \ast \phi_{1})(h)=o(1)$, then $(\tau \ast \phi_{2})(h) =o(1)$.
\end{lemma}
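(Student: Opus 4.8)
Lemma \ref{lemwienerdivision}: Given $\phi_1, \phi_2 \in L^1$ with $\hat{\phi_2}$ compactly supported, $\hat{\phi_1} \neq 0$ on $\operatorname{supp}\hat{\phi_2}$, and $\tau \in L^\infty$ with $(\tau * \phi_1)(h) = o(1)$, conclude $(\tau * \phi_2)(h) = o(1)$.

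The plan is to reduce the statement to a factorization of $\phi_2$ through $\phi_1$ in the convolution algebra $L^{1}(\mathbb{R})$, after which the conclusion follows from a routine dominated-convergence argument. The guiding observation is that, whenever one can write $\phi_2 = \phi_1 \ast \chi$ with $\chi \in L^{1}(\mathbb{R})$, associativity of convolution gives $\tau \ast \phi_2 = (\tau \ast \phi_1) \ast \chi$, so that the hypothesis $(\tau \ast \phi_1)(h) = o(1)$ transfers to $\tau \ast \phi_2$. Thus the entire content of the lemma lies in producing such a $\chi$, and this is exactly where Wiener's local division lemma enters.

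To obtain the factorization, I would argue as follows. Since $\phi_2 \in L^{1}(\mathbb{R})$, its transform $\hat{\phi}_2$ lies in the Wiener algebra $A(\mathbb{R})$ and, by hypothesis, $K = \operatorname*{supp}\hat{\phi}_2$ is compact with $\hat{\phi}_1$ nonvanishing on $K$. Near each point of $K$ the reciprocal $1/\hat{\phi}_1$ agrees locally with an element of $A(\mathbb{R})$; covering $K$ by finitely many such neighborhoods and patching with a smooth partition of unity produces $\psi \in L^{1}(\mathbb{R})$ with $\hat{\psi}$ of compact support and $\hat{\psi}\,\hat{\phi}_1 = 1$ on an open neighborhood $W \supset K$. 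Multiplying by $\hat{\phi}_2$, which is supported in $K \subset W$, yields $\hat{\psi}\,\hat{\phi}_1\,\hat{\phi}_2 = \hat{\phi}_2$ identically, i.e. $\psi \ast \phi_1 \ast \phi_2 = \phi_2$. Setting $\chi = \psi \ast \phi_2 \in L^{1}(\mathbb{R})$ then gives $\phi_1 \ast \chi = \phi_2$, as desired.

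With the factorization secured, I would write $u = \tau \ast \phi_1$, which is bounded since $\tau \in L^{\infty}(\mathbb{R})$ and $\phi_1 \in L^{1}(\mathbb{R})$, and which satisfies $u(h) \to 0$ as $h \to \infty$. Then $(\tau \ast \phi_2)(h) = (u \ast \chi)(h) = \int_{-\infty}^{\infty} u(h-y)\,\chi(y)\,\mathrm{d}y$, and since $|u(h-y)\chi(y)| \leq \|u\|_{\infty}\,|\chi(y)|$ is dominated by an integrable function while $u(h-y) \to 0$ for each fixed $y$ as $h \to \infty$, the dominated convergence theorem forces $(\tau \ast \phi_2)(h) = o(1)$.

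I expect the only genuine obstacle to be the careful invocation of Wiener's division lemma: one must glue the pointwise local reciprocals of $\hat{\phi}_1$ into a \emph{single} $L^{1}$-transform $\hat{\psi}$ that equals $1/\hat{\phi}_1$ on a full neighborhood of $K$, not merely at isolated points. It is precisely the compactness of $K$ together with the nonvanishing of $\hat{\phi}_1$ there that makes the finite covering and partition-of-unity patching succeed; once $\chi$ has been produced, everything that follows is soft.
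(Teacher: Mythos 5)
Your proof is correct and follows essentially the same route as the paper: factor $\phi_{2}=\psi\ast\phi_{1}$ with $\psi\in L^{1}(\mathbb{R})$ via Wiener division, then observe that convolving the $o(1)$ function $\tau\ast\phi_{1}$ with an $L^{1}$ function preserves $o(1)$. The only difference is that the paper cites Wiener's division lemma as a black box to produce $\hat{\psi}=\hat{\phi}_{2}/\hat{\phi}_{1}$ directly, whereas you reconstruct it from the local $1/f$ lemma and a partition of unity; both are fine.
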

\begin{proof} By Wiener's division lemma \cite[Chap. II, Thm. 7.3]{korevaarbook}, there is $\psi \in L^{1}(\mathbb{R})$ such that $\hat{\psi} = \hat{\phi_{2}}/\hat{\phi_{1}}$, or $\psi \ast \phi_{1} = \phi_{2}$. Since convolving an $o(1)$-function with an $L^{1}$-function remains $o(1)$, we obtain $(\tau \ast \phi_{2})(h) =( (\tau \ast \phi_{1}) \ast \psi )(h)= o(1)$.
\end{proof}

\begin{theorem}
\label{thol1} Let $S\in L^{1}_{loc}[0,\infty)$ and let $\{\varphi_{\lambda}\}_{\lambda\in J}$ be a family of functions such that $\varphi_{\lambda}\in A_{c}(\mathbb{R})$ for each $\lambda \in J$ and the following property holds:
\begin{enumerate}
 \item [] For any $t \in \mathbb{R}$, there exists some $\lambda_{t}\in J$ such that $\varphi_{\lambda_{t}}(t)\neq 0$. Moreover, when $t=0$, the Fourier transform of the corresponding $\varphi_{\lambda_{0}}$ is non-negative as well.
\end{enumerate}
Then,
$$
 S(x) \sim ae^{x}
$$
 if and only if $S$ is log-linearly slowly decreasing, \eqref{LaplaceConvergenceeq} holds, and the analytic function \eqref{LaplacePoleeq} has pseudofunction boundary behavior on $\Re e \: s = 1$ with respect to every $\varphi_{\lambda}$. 
\end{theorem}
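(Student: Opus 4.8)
The plan is to reduce everything to Theorem~\ref{thlocalpseudofunction}. The forward implication is immediate: if $S(x)\sim ae^{x}$, then Theorem~\ref{thlocalpseudofunction} already yields that $S$ is log-linearly slowly decreasing, that \eqref{LaplaceConvergenceeq} holds, and that \eqref{LaplacePoleeq} has local pseudofunction boundary behavior on the whole line $\Re e\:s=1$; Proposition~\ref{W-Icosistencyprop} then upgrades this to pseudofunction boundary behavior with respect to every $\varphi_{\lambda}\in A_{c}(\mathbb{R})$. So all the real work lies in the converse.

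For the converse, I would first secure boundedness. Log-linear slow decrease trivially implies log-linear bounded decrease, and pseudofunction boundary behavior with respect to the distinguished function $\varphi_{\lambda_{0}}$ (which has $\varphi_{\lambda_{0}}(0)\neq 0$ and non-negative Fourier transform) is in particular pseudomeasure boundary behavior with respect to $\varphi_{\lambda_{0}}$. Corollary~\ref{thboundedl1} therefore gives $S(x)=O(e^{x})$, so that $\Delta(x)=e^{-x}S(x)$ is bounded and $u:=\Delta-aH\in L^{\infty}(\mathbb{R})$ is supported in $[0,\infty)$. Writing $G$ for the function \eqref{LaplacePoleeq}, its distributional boundary value on $\Re e\:s=1$ is exactly $g:=\hat{u}$, which, being the Fourier transform of a bounded function, is even a global pseudomeasure; the goal is to promote $g$ to a \emph{local pseudofunction} on all of $\mathbb{R}$, for then Theorem~\ref{thlocalpseudofunction} finishes the proof.

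The crux is a localization argument built on Wiener's division lemma. Fix $t_{0}\in\mathbb{R}$ and choose $\lambda_{t_{0}}$ with $\varphi_{\lambda_{t_{0}}}(t_{0})\neq 0$; write $\varphi_{\lambda_{t_{0}}}=\hat{\Phi}$ with $\Phi\in L^{1}(\mathbb{R})$. A routine Parseval computation (using that $u\in L^{\infty}$ and $\mathcal{L}\{u;\sigma+i\,\cdot\}=\mathcal{F}\{u(x)e^{-\sigma x}\}$, and passing $\sigma\to 0^{+}$ by dominated convergence) identifies
$$
I_{\varphi_{\lambda_{t_{0}}}}(h)=2\pi\,(u\ast\Phi)(h),
$$
so the hypothesis forces $(u\ast\Phi)(h)=o(1)$ as $h\to\infty$. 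Likewise, for any $\chi\in\mathcal{D}(\mathbb{R})$ one computes $\left\langle g(t),e^{iht}\chi(t)\right\rangle=(u\ast\Psi)(h)$, where $\Psi(x)=\hat{\chi}(-x)$ satisfies $\hat{\Psi}=2\pi\chi$. Since $\varphi_{\lambda_{t_{0}}}$ is continuous and nonzero at $t_{0}$, it is nonzero on some open neighborhood $W$ of $t_{0}$; for $\chi\in\mathcal{D}(W)$ we then have $\operatorname{supp}\hat{\Psi}=\operatorname{supp}\chi\subset W$ and $\hat{\Phi}=\varphi_{\lambda_{t_{0}}}\neq 0$ there. Applying Lemma~\ref{lemwienerdivision} with $\tau=u$, $\phi_{1}=\Phi$, $\phi_{2}=\Psi$ transfers the decay, giving $\left\langle g(t),e^{iht}\chi(t)\right\rangle=o(1)$ as $h\to\infty$ for every $\chi\in\mathcal{D}(W)$. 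By Proposition~\ref{W-Iprop1} it suffices to control $h\to+\infty$, so $g$ is a local pseudofunction on $W$, i.e.\ a pseudofunction at $t_{0}$. As $t_{0}$ was arbitrary, $g\in PF_{loc}(\mathbb{R})$, that is, \eqref{LaplacePoleeq} has local pseudofunction boundary behavior on the whole line, and Theorem~\ref{thlocalpseudofunction} yields $S(x)\sim ae^{x}$.

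I expect the main obstacle to be this localization step: converting a single scalar estimate $I_{\varphi_{\lambda_{t_{0}}}}(h)=o(1)$ into the genuinely distributional local pseudofunction property near $t_{0}$. This is precisely where Wiener's division lemma is essential—the non-vanishing of $\varphi_{\lambda_{t_{0}}}$ near $t_{0}$ is exactly the hypothesis $\hat{\phi}_{1}\neq 0$ on $\operatorname{supp}\hat{\phi}_{2}$ needed to divide—while the reduction to $h\to+\infty$ via Proposition~\ref{W-Iprop1} and the identification of boundary values via Parseval are the supporting technical points.
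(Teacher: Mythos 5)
Your proposal is correct and follows essentially the same route as the paper: Corollary~\ref{thboundedl1} for the $O(e^{x})$ bound, a Parseval identification of $I_{\varphi_{\lambda}}(h)$ as a convolution with $\Delta-aH$, Lemma~\ref{lemwienerdivision} to localize near each $t_{0}$ where some $\varphi_{\lambda_{t_0}}$ is nonvanishing, and then Theorem~\ref{thlocalpseudofunction} to conclude. Your explicit appeal to Proposition~\ref{W-Iprop1} to handle the one-sided limit $h\to+\infty$ is a detail the paper leaves implicit, but it is the intended justification.
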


\begin{proof} Once again the direct implication is straightforward, so we only prove the converse. By Corollary \ref{thboundedl1}, it follows that $\Delta(x) := e^{-x}S(x) = O(1)$. Modifying $\Delta$ on a finite interval, we may assume that $\Delta\in L^{\infty}(\mathbb{R})$. The usual calculations done above (cf. the proof of Proposition \ref{thboundedness}) show that $\int_{-\infty}^{\infty} (\Delta(x+h) -aH(x+h))\hat{\varphi}_{\lambda}(x)\mathrm{d}x=o(1)$, $x\to\infty$, for each $\lambda\in J$, where again $H$ denotes the Heaviside function. (We may now apply dominated convergence to interchange limit and integral because $\Delta\in L^{\infty}(\mathbb{R})$.) Pick $t_{0}\in\mathbb{R}$. Lemma \ref{lemwienerdivision} then ensures $\left\langle \hat{\Delta}(t)-a\hat{H}(t),\varphi(t)e^{iht}\right\rangle=\left\langle \Delta(x+h)-aH(x+h),\hat{\varphi}(x)\right\rangle=o(1)$ for all $\varphi\in\mathcal{D}(\mathbb{R})$ with support in a sufficiently small (but fixed) neighborhood of $t_{0}$. This shows that $\hat{\Delta}-a\hat{H}\in PF_{loc}(\mathbb{R})$. Since this distribution is the boundary value of \eqref{LaplacePoleeq}  on $\Re e\:s=1$, Theorem \ref{thlocalpseudofunction} yields $S(x) \sim ae^{x}$.
\end{proof}

Observe that Zhang's theorem (Theorem \ref{thzhang}) follows at once from Theorem \ref{thol1} upon setting $\varphi_{\lambda}(t) = \chi_{[-\lambda,\lambda]}(t) (1-\left|t\right|/\lambda)$. Here one has $\hat{\varphi}_{\lambda}(x) = 4\sin^{2}(\lambda x/2)/(x^{2}\lambda)$. More generally,

\begin{corollary} \label{W-Ic2} Let $S\in L^{1}_{loc}[0,\infty)$ and let $\varphi\in A_{c}(\mathbb{R})$ be non-identically zero such that $\hat{\varphi}$ is non-negative. Then,
$$
S(x)\sim a e^{x}
$$
if and only if  $S$ is log-linearly slowly decreasing, \eqref{LaplaceConvergenceeq} holds, and the analytic function $G(s)=\mathcal{L}\{S;s\}-a/(s-1)$ satisfies:  There is $\lambda_{0}>0$ such that for each $\lambda\geq\lambda_{0}$
\[
 I_{\lambda}(h)=\lim_{\sigma \rightarrow 1^{+}} \int^{\infty}_{-\infty} G(\sigma + it) e^{iht} \varphi\left(\frac{t}{\lambda}\right)\mathrm{d}t
\]
 exists for all sufficiently large $h>h_{\lambda}$ and
$ \displaystyle \lim_{h \rightarrow \infty} I_{\lambda}(h)= 0.
$
\end{corollary}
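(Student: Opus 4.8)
The plan is to derive the corollary directly from Theorem \ref{thol1} by choosing the one-parameter dilation family $\varphi_{\lambda}(t)=\varphi(t/\lambda)$, indexed by $\lambda$ ranging over a half-line $J=[\lambda_{0},\infty)$. First I would record the two structural facts about this family. Dilation preserves $A_{c}(\mathbb{R})$: if $\varphi=\hat{\psi}$ with $\psi\in L^{1}(\mathbb{R})$, then $\varphi_{\lambda}=\mathcal{F}\{\lambda\psi(\lambda\,\cdot)\}$ still lies in $A(\mathbb{R})$, and rescaling the variable does not destroy compactness of the support, so $\varphi_{\lambda}\in A_{c}(\mathbb{R})$ for every $\lambda>0$. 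Moreover, a direct change of variables gives $\hat{\varphi}_{\lambda}(x)=\lambda\hat{\varphi}(\lambda x)$, which is non-negative for all $\lambda>0$ because $\hat{\varphi}\geq 0$ (this is exactly the computation recorded just before the corollary when $\varphi(t)=\chi_{[-1,1]}(t)(1-|t|)$).

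The only point requiring a small argument is the covering hypothesis of Theorem \ref{thol1}. Since $\varphi\in A(\mathbb{R})$ it is continuous, and Fourier inversion together with $\hat{\varphi}\geq 0$ and $\varphi\not\equiv 0$ forces $\varphi(0)=\frac{1}{2\pi}\int_{-\infty}^{\infty}\hat{\varphi}(t)\,\mathrm{d}t>0$, the integral being strictly positive because $\hat{\varphi}$ is a non-negative, non-trivial $L^{1}$ function. Hence there is $\eta>0$ with $\varphi(u)\neq 0$ for $|u|<\eta$, and for any $t\in\mathbb{R}$ I may pick $\lambda_{t}\geq\lambda_{0}$ so large that $|t/\lambda_{t}|<\eta$, giving $\varphi_{\lambda_{t}}(t)=\varphi(t/\lambda_{t})\neq 0$. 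For $t=0$ the corresponding $\varphi_{\lambda_{0}}(0)=\varphi(0)\neq 0$ and, by the previous paragraph, $\hat{\varphi}_{\lambda_{0}}\geq 0$. Thus $\{\varphi_{\lambda}\}_{\lambda\geq\lambda_{0}}$ satisfies the covering property required by Theorem \ref{thol1}, for every choice of $\lambda_{0}>0$.

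With these verifications in hand both implications follow. The hypothesis of the corollary on $I_{\lambda}(h)$ is, by Definition \ref{W-Idef1}, precisely the statement that the analytic function $G(s)=\mathcal{L}\{S;s\}-a/(s-1)$ has pseudofunction boundary behavior on $\Re e\: s=1$ with respect to each $\varphi_{\lambda}=\varphi(\cdot/\lambda)$. For the converse direction I would take $\lambda_{0}$ as furnished by the hypothesis, set $J=[\lambda_{0},\infty)$, and invoke the converse part of Theorem \ref{thol1} to conclude $S(x)\sim ae^{x}$. For the direct direction, assuming $S(x)\sim ae^{x}$, Theorem \ref{thlocalpseudofunction} (equivalently the direct part of Theorem \ref{thol1}) gives that $G$ has local pseudofunction boundary behavior on the whole line, whence Proposition \ref{W-Icosistencyprop} yields pseudofunction boundary behavior with respect to every $\varphi_{\lambda}$; in particular the stated condition holds with, say, $\lambda_{0}=1$. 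I do not anticipate a genuine obstacle here: the entire content is the reduction to Theorem \ref{thol1}, and the only mildly delicate step is the positivity $\varphi(0)>0$, which is exactly what guarantees that the dilates $\varphi(\cdot/\lambda)$ cover every real $t$ and carry the required non-negativity at the origin.
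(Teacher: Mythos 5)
Your proposal is correct and is exactly the argument the paper intends: Corollary \ref{W-Ic2} is presented as an immediate instance of Theorem \ref{thol1} via the dilation family $\varphi_{\lambda}(t)=\varphi(t/\lambda)$, generalizing the Fej\'{e}r-kernel choice used for Zhang's theorem. Your verifications (that $\varphi_{\lambda}\in A_{c}(\mathbb{R})$ with $\hat{\varphi}_{\lambda}(x)=\lambda\hat{\varphi}(\lambda x)\geq 0$, and that $\varphi(0)>0$ secures the covering hypothesis) fill in precisely the details the paper leaves implicit.
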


We conclude the article with two remarks.

\begin{remark}
\label{W-Irk1} Suppose that  $S$ is of local bounded variation on $[0,\infty)$ so that $\mathcal{L}\{S;s\}=s^{-1}\mathcal{L}\{\mathrm{d}S;s\}=s^{-1}\int_{0^{-}}^{\infty}e^{-sx}\mathrm{d}S(x)$. Then, the pseudomeasure boundary behavior of $\mathcal{L}\{S;s\}$ at $s=1$ in Proposition \ref{thboundedness} becomes equivalent to that of $\mathcal{L}\{\mathrm{d}S;s\}$ because the boundary value of $s$ is the invertible smooth function $1+it$ and smooth functions are multipliers for local pseudomeasures (and pseudofunctions). Likewise, the local pseudofunction boundary behavior of \eqref{LaplacePoleeq} in Theorem \ref{thlocalpseudofunction} is equivalent to that of 
\begin{equation}
\label{LaplaceStieltjeseq}
\mathcal{L}\{\mathrm{d}S;s\} - \frac{a}{s-1}.
\end{equation}
On the other hand, we do not know whether the pseudomeasure (pseudofunction) boundary behavior of  $\mathcal{L}\{S;s\}$ (of \eqref{LaplacePoleeq}) with respect to $\varphi$ (with respect to every $\varphi_{\lambda}$) can be replaced by that of $\mathcal{L}\{\mathrm{d}S;s\}$ (of (\ref{LaplaceStieltjeseq})) in Corollary \ref{thboundedl1} (in Theorem \ref{thol1}). The same comment applies to Corollary \ref{W-Ic2}.
\end{remark}
\begin{remark}
\label{W-Irk2} Let $G(s)$ be analytic on the half-plane $\Re e \: s > \alpha$ and suppose it has pseudomeasure (pseudofunction) boundary behavior on $\Re e \: s = \alpha$ with respect to some $\varphi \in A_{c}(\mathbb{R})$. If $\varphi(t_{0})=0$, then $G$ does not necessarily have pseudomeasure (pseudofunction) boundary behavior at $\alpha+it_0$. For example, if $G$ has meromorphic continuation to a neighborhood of $\alpha+it_0$ with a pole of order say $n\geq 2$ at the point $\alpha+it_0$ and if $\varphi$ is such that $\varphi^{(j)}(t_0)=0$ for $j=0,1,\dots,n$ and is supported in a sufficiently small neighborhood of $t_0$, we would have that $\varphi(t)G(\alpha+it)$ is continuous and hence a pseudo\-function, without $G(s)$ having itself pseudomeasure boundary behavior at $\alpha+it_0$. If $\varphi(t_{0})\neq 0$, however, it is unclear to us whether $G$ should have local pseudomeasure (pseudofunction) boundary behavior at $\alpha+it_0$. It would be interesting to establish whether the latter is true or false. Observe this question is closely related to the one raised in Remark \ref{W-Irk1}.

\end{remark}

\end{document}